\newtheorem{theorem}{Theorem}
\theoremstyle{definition}
\newtheorem*{example}{Example}
\title{Counting Divisions of a $2\times n$ Rectangular Grid}
\author{Jacob Brown}
\date{May 2021}
\begin{document}

\maketitle


Consider a $2\times n$ rectangle composed of $1\times 1$ squares. Suppose it is printed on a piece of paper that can be cut out with scissors. Cutting only along the edges between squares, how many ways are there to divide this rectangular board into two connected pieces? Three pieces? Four or five? What about $k$ pieces?

Durham and Richmond \cite{durham} showed that the number of ways to cut a $2\times n$ rectangle into two pieces is $2n^2-n$ and into three pieces is $\frac{2}{3}n^4-\frac{4}{3}n^3+\frac{11}{6}n^2-\frac{13}{6}n+1$. Their proofs centered around breaking the board up into $2\times 1$ rectangles and considering how each rectangle could be split into each piece. This enabled them to derive the formulas above, but became too complicated for greater numbers of pieces. When suggesting areas where future researchers could extend their work, the authors asked for a direct proof of the connection of their two-piece result to the hexagonal numbers, and a general formula for the number of divisions of an $m\times n$ board into $k$ pieces. Wagon \cite{wagon} answered the first question by using a combinatorial argument to relate the number of divisions to the pairs of numbers from the set $\{1,2,\dots,2n\}$.

Here, we answer the second question for a $2\times n$ rectangular grid. We achieve this by proving a recursive relationship that counts the number of divisions of the board into $k$ pieces. We then use fitting techniques on data generated from the recursion to find closed-form solutions for the number of ways to divide the board into four and five pieces. Finally, we show that any function satisfying this recursion must be a polynomial with predictable degree.




We will use the following definitions throughout our discussion. A $2\times n$ \textbf{board} is a rectangle consisting of $1\times 1$ \textbf{squares}, consisting of $2$ squares vertically and $n$ squares horizontally. Two squares are \textbf{adjacent} if and only if they share an edge. We can represent the board as a collection of sets $B=(S, A)$, where the $S$ is the set of $2n$ squares $S=\{x_0,x_1,\dots,x_{2n-1}\}$, ordered top row then bottom row, left to right (see Figure~\ref{fig:labeling}), and $A$ is the set of pairs of adjacent squares. We can easily determine if a pair $(x_i,x_j)$ is in $A$ if $i+1=j$ or if $i+2=j$ (where $i<j$).

\begin{figure}
    \centering
    \includegraphics[width=4cm]{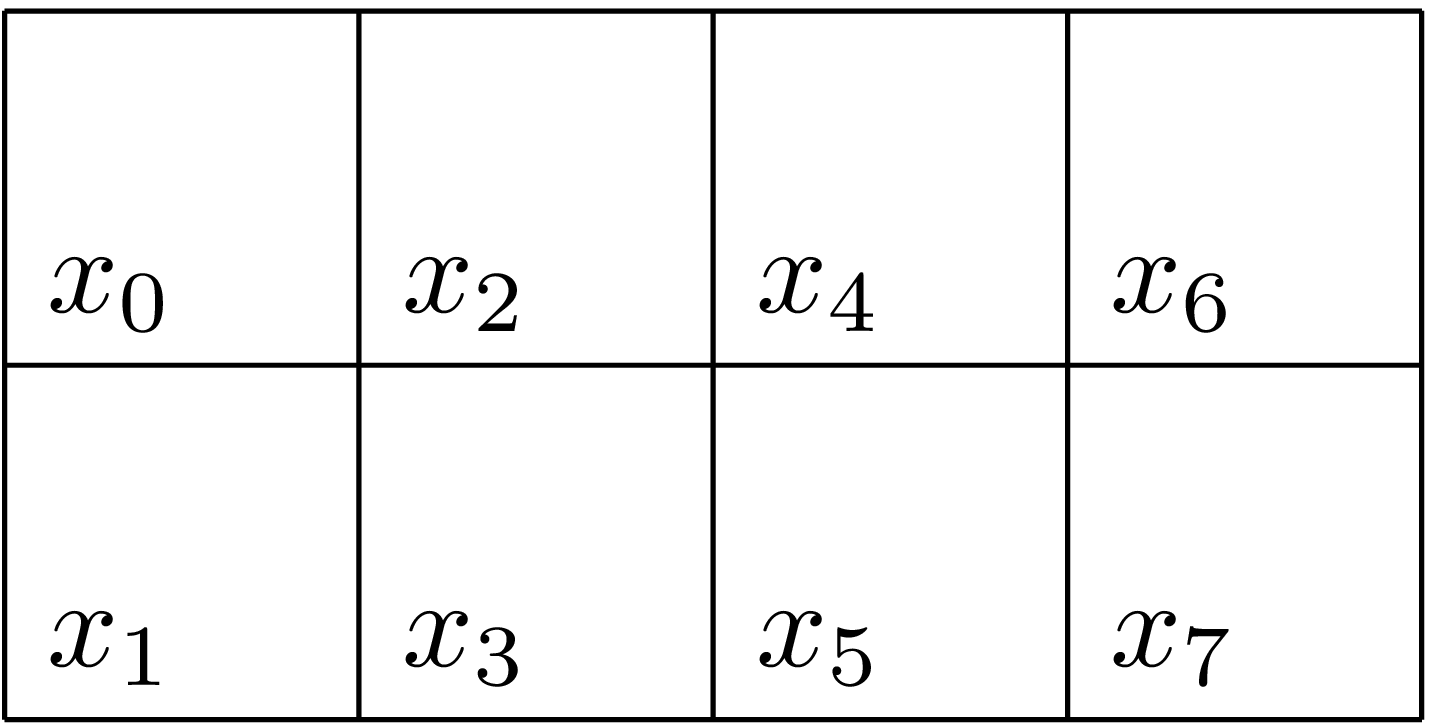}
    \caption{A $2\times 4$ board with labeled squares.}
    \label{fig:labeling}
\end{figure}

A \textbf{piece} is a subset $P\subseteq S$ such that for each square $x_p\in P$, there exists at least one other square $x_q\in P$ such that $x_p$ and $x_q$ are adjacent. A \textbf{division} $D$ of a board into $k$ pieces is a set of $k$ pieces $D=\{P_1,P_2,\dots,P_k\}$, such that each square is in exactly one piece (ie, $\bigcup_{i=1}^k P_i=S$, and $P_i\cap P_j=\emptyset$ if and only if $i\neq j$). We let $\mathcal{D}_k(n)$ denote the set of all divisions of the board into $k$ pieces, and we let $d_k(n)$ denote the number of divisions of a board into $k$ pieces, so $|\mathcal{D}_k(n)|=d_k(n)$. A \textbf{cut} is a pair of squares in a division that are adjacent, but are not in the same piece. 

Immediately, we can conclude some obvious results. As there is no way to divide the board into $0$ pieces, $d_0(n)=0$ for all $n$. It is also true that $d_1(n)=1$ for all $n$, as only the subset of $S$ that can be made is $D=\{S\}$. Likewise, $d_{2n}(n)=1$, as the only way to create $2n$ disjoint subsets whose union is $S$ is $D=\{\{x_0\},\{x_1\},\dots,\{x_{2n-1}\}\}$. 



\begin{figure}
\centering
\includegraphics[width=4cm,keepaspectratio]{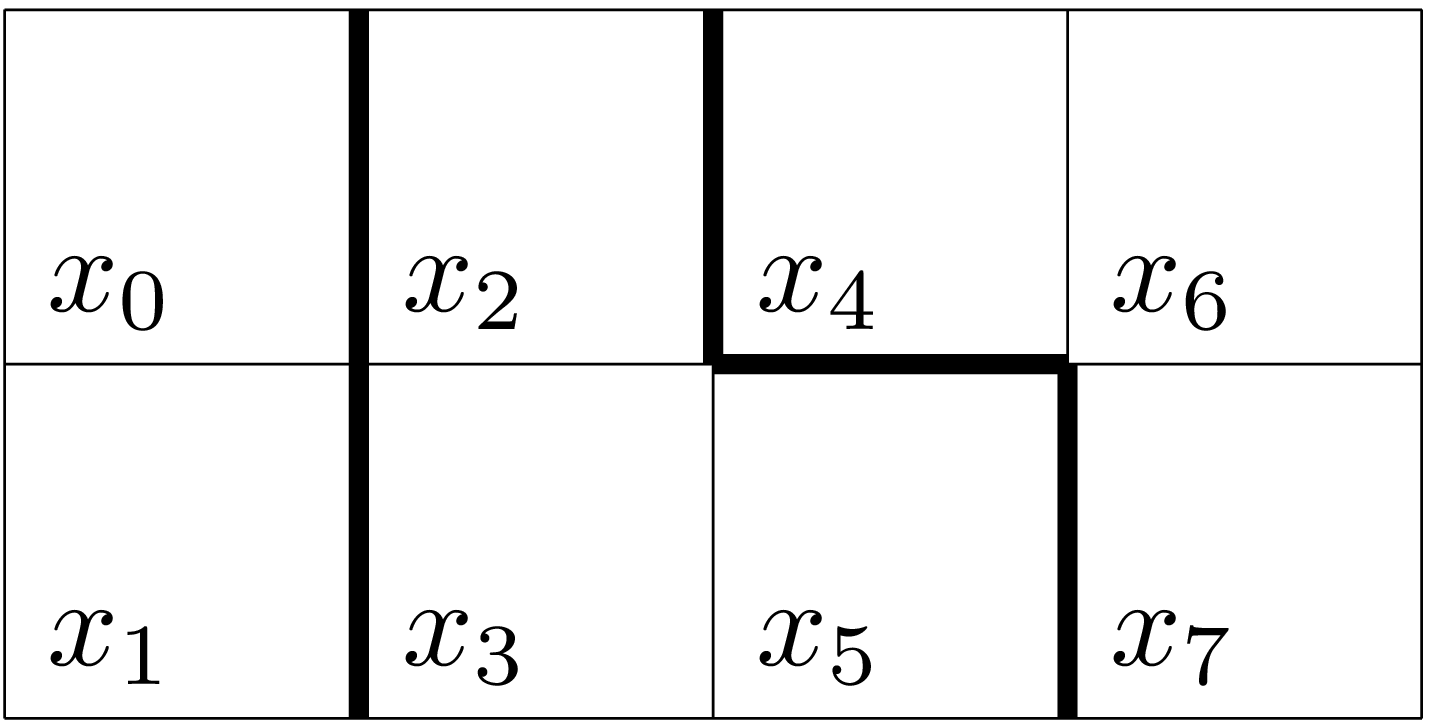}
\caption{A division of a $2\times 4$ board into 3 pieces. Expressed using our notation, this is $D= \{\{x_0,x_1\},\{x_2,x_3,x_5\},\{x_4,x_6,x_7\}\}$, and $D\in \mathcal{D}_3(4)$. Each thick line represents a cut.}
\label{fig:exampledivision}
\end{figure}

\section{Recursive Relationship for Counting Divisions into $k$ Pieces} 
This problem is ripe for recursion because the $2\times n$ board can be extended by simply adding two squares onto the rightmost end, creating a $2\times (n+1)$ board. We can then use these two newly-added squares to form divisions of the $2\times (n+1)$ board in a way that builds off the divisions of the $2\times n$ board nested inside of it. We will denote the $2\times n$ board as $B=(S, A)$, where $S=\{x_0,x_1,\dots,x_{2n-1}\}$, and the $2\times (n+1)$ board as $B'=(S', A')$, where $S'=S\cup\{x_{2n},x_{2n+1}\}$.

In order for $B'$ to build upon the divisions of $B$, special attention needs to be paid to the two rightmost squares of $B$, $x_{2n-2}$ and $x_{2n-1}$. Whether or not these squares are in different pieces is important for how the two newly added squares can be added to the board to create more divisions. We will partition the set of divisions $\mathcal{D}_k(n)$ into $\mathcal{S}_k(n)$ and $\mathcal{T}_k(n)$, where $\mathcal{S}_k(n)$ is the set of divisions where $x_{2n-2}$ and $x_{2n-1}$ are not in the same piece, and $\mathcal{T}_k(n)$ is the set of divisions where $x_{2n-2}$ and $x_{2n-1}$ are in the same piece. We will denote the number of divisions in each set as $|\mathcal{S}_k(n)|=s_k(n)$ and $|\mathcal{T}_k(n)|=t_k(n)$. 
\begin{theorem}
The number of divisions of a $2\times n$ board into $k$ pieces satisfies the following recursion:
\begin{equation*}
d_k(n+1)=d_{k-2}(n)+3d_{k-1}(n)+d_k^2(n)+2s_k(n)
\end{equation*}
\label{drecursion}
\end{theorem}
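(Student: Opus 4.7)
The plan is to partition $\mathcal{D}_k(n+1)$ according to how each division of $B'$ incorporates the two new squares $x_{2n}$ and $x_{2n+1}$, and then to reverse the counting by summing over compatible divisions of the original board $B$. Given any $D' \in \mathcal{D}_k(n+1)$, one associates a division of $B$ by restriction: remove $x_{2n}$ and $x_{2n+1}$ from whatever pieces they inhabit, and let the surviving pieces form $D$.

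Before enumerating cases, I would establish the preliminary fact that justifies this restriction: if a piece $P$ of $D'$ contains both $x_{2n-2}$ and $x_{2n-1}$, then $P \cap B$ is connected in $B$. This holds because $x_{2n-2}$ and $x_{2n-1}$ are adjacent in $B$, so they cannot be separated inside $P \cap B$. Consequently, the restriction never forces a single piece of $D'$ to break into multiple components, and $D$ is always a valid element of some $\mathcal{D}_j(n)$.

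I would then enumerate $\mathcal{D}_k(n+1)$ by the status of $x_{2n}$ and $x_{2n+1}$. If they share a piece $P$, there are four sub-cases: $P=\{x_{2n}, x_{2n+1}\}$, which contributes $d_{k-1}(n)$; or $P$ extends into $B$ through the piece of $x_{2n-2}$ only, through the piece of $x_{2n-1}$ only, or through a common piece containing both $x_{2n-2}$ and $x_{2n-1}$, contributing $s_k(n)$, $s_k(n)$, and $t_k(n)$ respectively. The first two here use $\mathcal{S}_k(n)$ because $x_{2n-2}$ and $x_{2n-1}$ must land in different pieces of $D$. If instead $x_{2n}$ and $x_{2n+1}$ lie in different pieces, each is either a singleton or joins its horizontal neighbor: two singletons gives $d_{k-2}(n)$, each asymmetric singleton-plus-extension configuration gives $d_{k-1}(n)$, and both extending into their respective neighbors gives $s_k(n)$, since those neighbors must then be in distinct pieces of $D$.

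Summing the eight contributions gives $d_{k-2}(n) + 3d_{k-1}(n) + 3s_k(n) + t_k(n)$, and substituting $t_k(n) = d_k(n) - s_k(n)$ yields the stated recursion. The main obstacle is verifying that the eight cases are mutually exclusive and exhaustive, with careful bookkeeping of when the resulting $D$ lies in $\mathcal{S}_k(n)$, $\mathcal{T}_k(n)$, or a smaller-indexed $\mathcal{D}_j(n)$. The connectivity fact above is the critical ingredient: without it there would be a spurious ninth case in which a piece of $D'$ restricts to two components in $B$, introducing a contaminating $s_{k+1}(n)$ term that the recursion does not admit.
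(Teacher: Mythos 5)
Your argument is correct and lands on exactly the same case analysis as the paper, just run in the opposite direction: the paper starts from a division of the $2\times n$ board and counts the ways to extend it by the new column ($1$ way from each member of $\mathcal{D}_{k-2}(n)$, $3$ from $\mathcal{D}_{k-1}(n)$, $1$ from $\mathcal{T}_k(n)$, $3$ from $\mathcal{S}_k(n)$), whereas you start from a division of the $2\times(n+1)$ board, classify it by the fate of $x_{2n}$ and $x_{2n+1}$, and identify where the restriction lands. Your eight sub-cases match the paper's $1+3+1+3$ enumeration term for term, and the final substitution $t_k(n)=d_k(n)-s_k(n)$ is the same, so the decomposition is not genuinely different. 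What your version adds, and the paper leaves implicit, is the verification that restriction to $B$ is well defined: a piece of $D'$ that meets $B$ cannot split into two components when $x_{2n}$ and $x_{2n+1}$ are deleted, because the only gateways from $B$ into the new column are the adjacent pair $x_{2n-2},x_{2n-1}$, whose adjacency short-circuits any detour through the new squares. That observation is what makes the extension/restriction correspondence an honest bijection and rules out the ``ninth case'' you flag (a contaminating $s_{k+1}(n)$ term); it is a worthwhile tightening of the paper's argument, though the resulting count is identical. The one place to be slightly more careful is that your stated connectivity fact covers only pieces containing both $x_{2n-2}$ and $x_{2n-1}$; you should note in passing that a piece containing at most one of them trivially restricts to a connected set, since the new column is then a dead end.
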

\begin{proof}
The two additional squares in $B'$ can be used to add 0, 1, or 2 extra pieces to each division of $B$. Thus, to obtain a division into $k$ pieces for $B'$, we only have to consider how the additional squares interact with the divisions of $B$ into \textbf{(i)} $k-2$, \textbf{(ii)} $k-1$, and \textbf{(iii)} $k$ pieces. Throughout this proof, we let $D=\{P_1,P_2,\dots,P_\ell\}$ be the division in $B$ into $\ell$ pieces and we let $D'$ be the division in $B'$.

\textbf{Case (i):} 
To obtain $k$ pieces in $B'$, $x_{2n}$ and $x_{2n+1}$ must both be added as an individual piece, that is, the division in $B'$ is:\\ $D'=\{P_1,P_2,\dots,P_{k-2},\{x_{2n}\},\{x_{2n+1}\}\}$ (see Figure~\ref{fig:kminus2}).\\ 
There is only one way to add the two squares for each division $D\in \mathcal{D}_{k-2}(n)$ to obtain a division in $\mathcal{D}_k(n+1)$. Therefore, there must be a term of $d_{k-2}(n)$ in the recursion.

\textbf{Case (ii):} 
To obtain $k$ pieces in $B'$, $x_{2n}$ and $x_{2n+1}$ must be used to add exactly one piece. There are three ways to do this. The first is to add both as a single piece:\\
$D'=\{P_1,P_2,\dots,P_{k-1},\{x_{2n},x_{2n+1}\}\}$.\\
The other two ways are to add $x_{2n}$ or $x_{2n+1}$ as its own piece, and attach the other to piece containing $x_{2n-2}$ and $x_{2n-1}$, respectively. If $x_{2n-2}\in P_i$ and $x_{2n-1}\in P_j$ (note that it is \textit{not} necessary for $i\neq j$), then the two ways to add the single piece are:\\
$D'=\{P_1,P_2,\dots,P_i\cup\{x_{2n}\},\dots,P_{k-1},\{x_{2n+1}\}\}$ and\\ $D'=\{P_1,P_2,\dots,P_j\cup\{x_{2n+1}\},\dots,P_{k-1},\{x_{2n}\}\}$ (see Figure~\ref{fig:kminus1}).\\
Thus, for each division $D\in \mathcal{D}_{k-1}(n)$, there are 3 ways to obtain a division in $\mathcal{D}_k(n+1)$. Therefore, there must be a term of $3d_{k-1}(n)$ in the recursion.

\textbf{Case (iii):} 
To obtain $k$ pieces in $B'$, $x_{2n}$ and $x_{2n+1}$ must both be attached to some other piece in $B$. How these squares can be added depends on if the division in $B$ is in $\mathcal{T}_k(n)$ or $\mathcal{S}_k(n)$. 

If $D\in \mathcal{T}_k(n)$, then $x_{2n-2},x_{2n-1}\in P_i$, that is, they are in the same piece. The only way to add $x_{2n}$ and $x_{2n+1}$ to remain at $k$ pieces is to add them both to $P_i$, so:\\
$D'=\{P_1,P_2,\dots,P_i\cup\{x_{2n},x_{2n+1}\},\dots,P_k\}$.\\
There is only one way to add the two squares for each division $D\in \mathcal{T}_{k}(n)$ to obtain a division in $\mathcal{D}_k(n+1)$. Therefore, there must be a term of $t_{k}(n)$ in the recursion.

If $D\in \mathcal{S}_k(n)$, then $x_{2n-2}\in P_i$ and $x_{2n-1}\in P_j$, where $i\neq j$. There are three ways to add $x_{2n}$ and $x_{2n+1}$ to remain at exactly $k$ pieces. They can both be added to $P_i$, they can both be added to $P_j$, or they can split, with $x_{2n}$ being added to $P_i$ and $x_{2n+1}$ being added to $P_j$:\\
$D'=\{P_1,P_2,\dots,P_i\cup\{x_{2n},x_{2n+1}\},\dots,P_{k}\}$,\\
$D'=\{P_1,P_2,\dots,P_j\cup\{x_{2n},x_{2n+1}\},\dots,P_{k}\}$, and \\
$D'=\{P_1,P_2,\dots,P_i\cup\{x_{2n}\},P_j\cup\{x_{2n+1}\},\dots,P_{k}\}$ (see Figure~\ref{fig:sep_recursion}).\\
Thus, for each division $D\in \mathcal{S}_{k}(n)$, there are 3 ways to obtain a division in $\mathcal{D}_k(n+1)$. Therefore, there must be a term of $3s_{k}(n)$ in the recursion.

Adding everything together, we get the following:
\begin{equation*}
    d_k(n+1)=d_{k-2}(n)+3d_{k-1}(n)+t_k(n)+3s_k(n)
\end{equation*}
It is quite clear that $\mathcal{D}_k(n)=\mathcal{S}_k(n)\cup \mathcal{T}_k(n)$ and $\mathcal{S}_k(n)\cap \mathcal{T}_k(n)=\emptyset$, so $d_k(n)=s_k(n)+t_k(n)$. We can substitute this into the equation above to obtain the recursion in the theorem statement.
\end{proof}

\begin{figure}
    \centering
    \includegraphics[width=4cm]{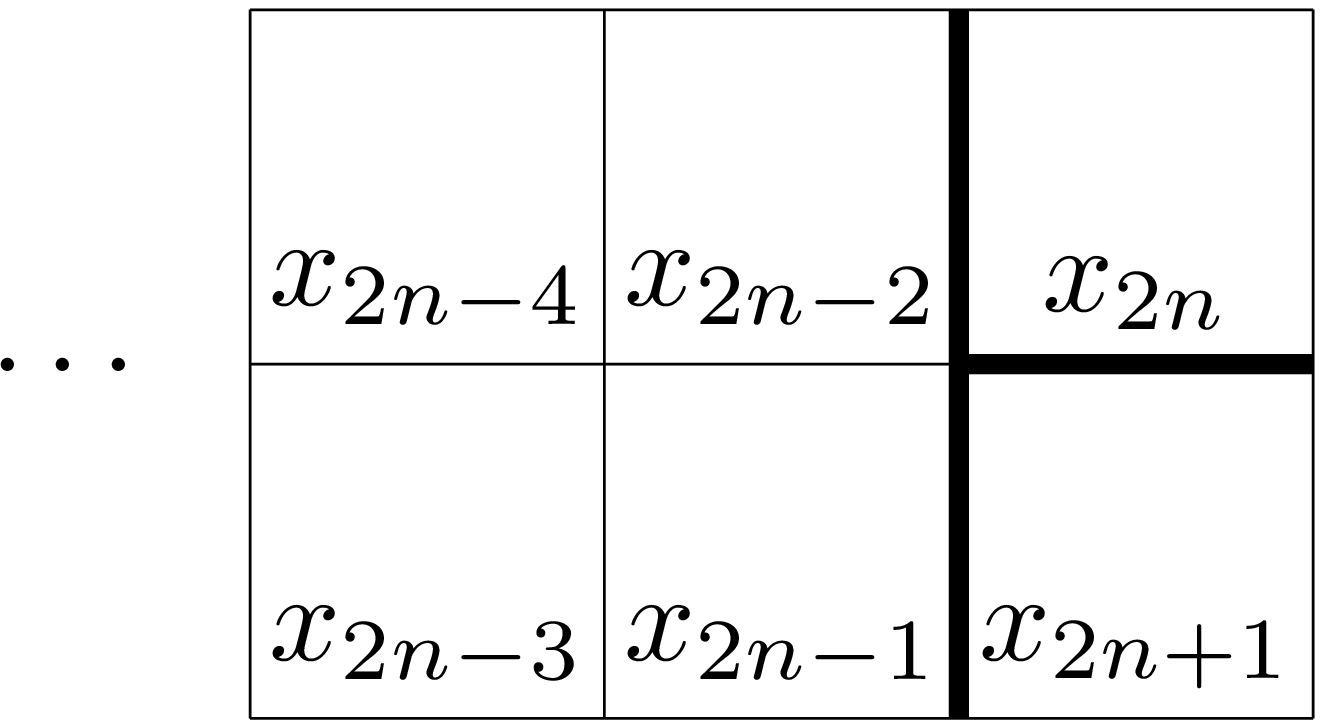}
    \caption{The single way to add the two squares to each division in $\mathcal{D}_{k-2}(n)$.}
    \label{fig:kminus2}
\end{figure}

\begin{figure}
    \centering
    \includegraphics[width=12cm]{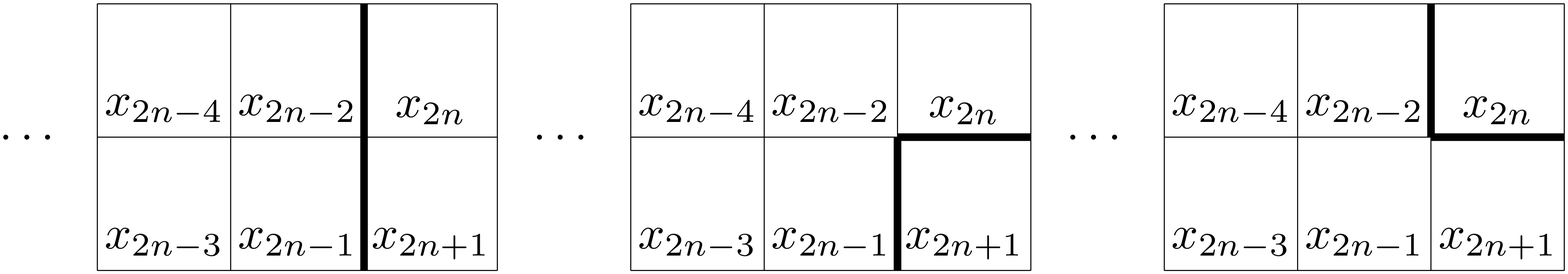}
    \caption{The three ways to add the two squares to each division in $\mathcal{D}_{k-1}(n)$. Note that a cut \textit{could} exist between $x_{2n-2}$ and $x_{2n-1}$, it has been omitted in this figure.}
    \label{fig:kminus1}
\end{figure}

\begin{figure}
    \centering
    \includegraphics[width=12cm]{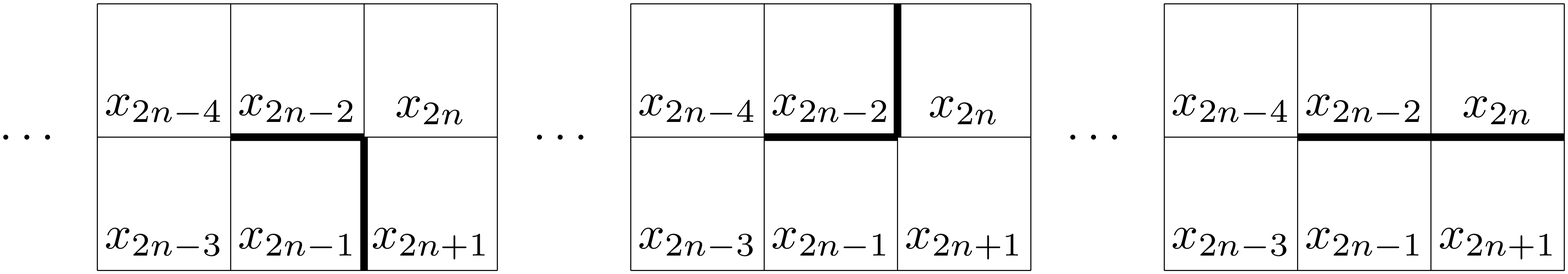}
    \caption{The three ways to add the two squares to each division in $\mathcal{S}_k(n)$.}
    \label{fig:sep_recursion}
\end{figure}

This recursion is only of use when $s_k(n)$ can be calculated. Fortunately, $s_k(n)$ can also be computed recursively, in terms that do not involve $d_k(n)$.
\begin{theorem}
The number of divisions of a $2\times n$ board into $k$ pieces where the rightmost squares separate satisfies the following recursion:
\begin{equation*}
s_k(n+1)=d_{k-2}(n)+2d_{k-1}(n)+s_k (n)
\end{equation*}
\label{s_recursion}
\end{theorem}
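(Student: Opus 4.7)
The plan is to reuse the case analysis from Theorem~\ref{drecursion} and, in each case, count only those extensions of a division $D\in\mathcal{D}_\ell(n)$ into a division $D'\in\mathcal{D}_k(n+1)$ in which $x_{2n}$ and $x_{2n+1}$ end up in \emph{different} pieces. Since Theorem~\ref{drecursion} has already enumerated every legal way to append the two new squares when $\ell\in\{k-2,k-1,k\}$, the task reduces to filtering that enumeration by the separation condition.

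In Case (i), where $D\in\mathcal{D}_{k-2}(n)$, the unique extension adds $x_{2n}$ and $x_{2n+1}$ as two independent singleton pieces, so they are automatically separated; this contributes $d_{k-2}(n)$. In Case (ii), where $D\in\mathcal{D}_{k-1}(n)$, Theorem~\ref{drecursion} lists three extensions: creating the new piece $\{x_{2n},x_{2n+1}\}$, attaching $x_{2n}$ to the piece containing $x_{2n-2}$ while $x_{2n+1}$ stands alone, and the symmetric attachment of $x_{2n+1}$ to the piece containing $x_{2n-1}$. The first places the two new squares together, while the other two separate them, so Case (ii) contributes $2d_{k-1}(n)$.

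Case (iii) splits on whether $D\in\mathcal{T}_k(n)$ or $D\in\mathcal{S}_k(n)$. When $D\in\mathcal{T}_k(n)$, the only legal extension forces both new squares into the single piece containing $x_{2n-2}$ and $x_{2n-1}$, so they cannot be separated and this subcase contributes nothing. When $D\in\mathcal{S}_k(n)$, exactly one of the three legal extensions --- the split attachment sending $x_{2n}$ to $P_i$ and $x_{2n+1}$ to $P_j$ --- keeps the two new squares apart, while the other two lump them both into $P_i$ or both into $P_j$. Hence this subcase contributes $s_k(n)$, and summing over all three cases gives $s_k(n+1)=d_{k-2}(n)+2d_{k-1}(n)+s_k(n)$.

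The argument is essentially bookkeeping, and the only real pitfall is to remember that the separation condition refers solely to the final piece assignment of $x_{2n}$ and $x_{2n+1}$ in $D'$, independently of whether their neighbors $x_{2n-2}$ and $x_{2n-1}$ were separated in the parent division $D$; in particular, a parent $D\in\mathcal{S}_k(n)$ can produce either type of child, so one must be careful to count only the split extension rather than all three.
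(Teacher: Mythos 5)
Your proposal is correct and follows exactly the same route as the paper: it reuses the enumeration of extensions from Theorem~\ref{drecursion} and filters for those in which $x_{2n}$ and $x_{2n+1}$ land in different pieces, finding one such extension per division in $\mathcal{D}_{k-2}(n)$, two per division in $\mathcal{D}_{k-1}(n)$, none per division in $\mathcal{T}_k(n)$, and one per division in $\mathcal{S}_k(n)$. Your version is actually more explicit than the paper's one-paragraph argument, and your closing caveat about the separation condition referring to the new squares rather than the old ones is a worthwhile clarification.
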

\begin{proof}
Using the same arguments from the proof of Theorem~\ref{drecursion}, finding these divisions simply involves counting the number of divisions of $B'$ where $x_{2n}$ and $x_{2n+1}$ end up in different pieces. They separate in the $d_{k-2}(n)$ term, in two of the cases of the $d_{k-1}(n)$ term, none of the cases of the $t_k(n)$ term, and in one of the cases of the $s_k(n)$ term, so the recursion is just the sum of these cases.
\end{proof}

As a check of this result, we show that Durham and Richmond's result for the number of divisions into two pieces satisfies this recursion. Note that our notation of $s_2(n)$ corresponds to their result that the number of partitions where the first two squares are in different pieces is $2n-1$.
\begin{align*}
d_0(n)=0,&\quad d_1(n)=1,\quad d_2(n)=2n^2-n,\quad s_2(n)=2n-1\\
d_2(n+1)&=0+3(1)+2n^2-n+2(2n-1)
\\&=3+2n^2-n+4n-2=2n^2+3n+1
\\&=2(n+1)^2-(n+1)
\end{align*}

\section{Closed-Form Solutions for Divisions of $2\times n$ Board into Three, Four, and Five Pieces} \label{sec:threeandfour}
Using the tools we have proven thus far, we can find closed-form solutions for the number of divisions of a $2\times n$ board into $k=3,4,5$ pieces. First, we find closed forms of $s_3(n)$, $s_4(n)$, and $s_5(n)$ by generating data using the recursions and then fitting a polynomial to the points. We repeat this process for $d_3(n)$ (and show it matches the result by Durham and Richmond), $d_4(n)$, and $d_5(n)$. Because we know the degrees of each of these polynomials, we only have to generate $2k-2$ points for each $s_k(n)$ and $2k-1$ points for each $d_k(n)$ to exactly determine their coefficients. 

There are multiple methods of interpolation, all of which will produce the same result. Our fitting methods are least-squares regression using Numpy~\cite{harris} and Lagrange interpolation~\cite{archer}. Newton's divided difference method~\cite{weisstein} is another avenue that future researchers can employ.





\begin{table}[ht]
    \centering
    \begin{tabular}{c|c}
        $k$&$s_k(n)$\\\hline & \\
         3&$\frac{4}{3}n^3-3n^2+\frac{8}{3}n-1$ \\&\\
         4&$\frac{4}{15}n^5-\frac{4}{3}n^4+\frac{11}{3}n^3-\frac{37}{6}n^2+\frac{167}{30}n-2$ \\&\\
         5&$\frac{25}{1008}n^7-\frac{37}{180}n^6+\frac{71}{72}n^5-\frac{107}{36}n^4+\frac{751}{144}n^3-\frac{217}{45}n^2+\frac{149}{84}n$\\~
    \end{tabular}
    \caption{Closed-form solutions for the number of divisions into $k$ pieces where the rightmost squares separate.}
    \label{tab:s_formulas}
\end{table}

\begin{table}[ht]
    \centering
    \begin{tabular}{c|c}
        $k$&$d_k(n)$\\\hline & \\
         3&$\frac{2}{3}n^4-\frac{4}{3}n^3+\frac{11}{6}n^2-\frac{13}{6}n+1$ \\&\\
         4&$\frac{4}{45}n^6-\frac{2}{5}n^5+\frac{25}{18}n^4-\frac{7}{2}n^3+\frac{226}{45}n^2-\frac{18}{5}n+1$\\&\\
         5&$\frac{2}{315}n^8+\frac{2}{15}n^6-\frac{1}{5}n^5+\frac{49}{120}n^4-\frac{7}{12}n^3+\frac{1139}{2520}n^2-\frac{13}{60}n$\\~
    \end{tabular}
    \caption{Closed-form solutions for the number of divisions into $k$ pieces.}
    \label{tab:d_formulas}
\end{table}

The formulas for $s_k(n)$ are shown in Table \ref{tab:s_formulas} and the formulas for $d_k(n)$ are shown in Table \ref{tab:d_formulas}. These polynomials can be explicitly proven to hold by induction using the recursions. As an example, we show the formulas hold for $k=3$ by using the proposed polynomials, as well as the proven polynomials for $d_1(n)$ and $d_2(n)$.

\begin{example}
\begin{align*}
s_3(n+1)&=d_1(n)+2d_2(n)+s_3(n)\\
&=(1)+2n(2n-1)+\frac{4}{3}n^3-3n^2+\frac{8}{3}n-1\\
&=\frac{4}{3}n^3+n^2+\frac{2}{3}n\\
&=\frac{4}{3}(n+1)^3-3(n+1)^2+\frac{8}{3}(n+1)-1\\
d_3(n+1)&=d_1(n)+3d_2(n)+d_3(n)+2s_3(n)\\
&=(1)+3n(2n-1)+\frac{2}{3}n^4-\frac{4}{3}n^3+\frac{11}{6}n^2-\frac{13}{6}n+1\\
&\quad+2\left(\frac{4}{3}n^3-3n^2+\frac{8}{3}n-1\right)\\
&= \frac{2}{3}n^4+\frac{4}{3}n^3+\frac{11}{6}n^2+\frac{1}{6}n\\
&= \frac{2}{3}(n+1)^4-\frac{4}{3}(n+1)^3+\frac{11}{6}(n+1)^2-\frac{13}{6}(n+1)+1
\end{align*}
\end{example}
The proofs for $k=4$ and $k=5$ proceed in the exact same manner, and are omitted for brevity.

The greatest challenge to proving formulas like these is to actually obtain the polynomials; the proof that they hold consists of only algebraic manipulations that can readily be outsourced to a computer algebra system. In theory, this process can be continued for $k=6,7,\dots$, as long as enough points are generated using the recursion. So long as the computer calculating the fitting polynomials has sufficient precision, the decimals returned by the interpolation algorithm can be converted into fractions relatively easily. 

\section{Functional Forms of $d_k^2(n)$ and $s_k^2(n)$}\label{sec:forms}
In the previous section, we implicitly assumed that the functions that count the number of divisions are polynomials on $n$. This assumption is correct. In this section, we prove that for all $k$, $d_k(n)$ and $s_k(n)$ must be polynomials on $n$. Moreover, we show that the degrees of these polynomials are $2k-2$ and $2k-3$, respectively. 

Our proofs of the polynomial nature of the functions is based on the fact that the sum of polynomials is a polynomial, and our proof of their degrees is based on Faulhaber's formula. A straightforward derivation of Faulhaber's formula is given in \cite{orosi}. The formula gives an expression for the sum of the first $n$ integers each raised to the positive integer power $p$ as a polynomial of degree $p+1$. Stated more explicitly,
\begin{equation*}
    \sum_{k=1}^nk^p=\frac{1}{p+1}\sum_{j=0}^p(-1)^j\binom{p+1}{j} B_jn^{p+1-j},
\end{equation*}
where $B_j$ is the $j$-th Bernoulli number \cite{apostol}. As we are only interested in the degree of the polynomial, we will condense the coefficients into a single variable, 
\begin{equation*}
    \sum_{k=1}^nk^p=\sum_{j=0}^pA_jn^{p+1-j},
\end{equation*}
where $A_j=\frac{(-1)^j}{p+1}\binom{p+1}{j}B_j$. Before we can employ the formula, we must rewrite our recursions in terms of summations. Beginning with $s_k(n)$, by Theorem~\ref{s_recursion} we have
\begin{equation*}
s_k(n)=d_{k-2}(n-1)+2d_{k-1}(n-1)+s_k(n-1).
\end{equation*}
Substituting for $s_k(n-1)$ using the recursion gives
\begin{align*}
s_k(n)&=d_{k-2}(n-1)+2d_{k-1}(n-1)\\
&+d_{k-2}(n-2)+2d_{k-1}(n-2)+s_k(n-2).
\end{align*}
This can be repeated until the input is 0, where $d_k(0)=s_k(0)=0$ for all $k$, so
\begin{equation}
s_k(n)=\sum_{j=1}^{n-1} d_{k-2}(j)+2d_{k-1}(j).
\label{eq:s_summation}
\end{equation}
We can do the same for $d_k(n)$. From Theorem~\ref{drecursion} we have
\begin{equation*}
d_k(n)= d_{k-2}(n-1)+3d_{k-1}(n-1)+2s_k(n-1)+d_k(n-1).
\end{equation*}
Substituting for $d_k(n-1)$ using the recursion gives
\begin{align*}
d_k(n)&= d_{k-2}(n-1)+3d_{k-1}(n-1)+2s_k(n-1)\\
&+d_{k-2}(n-2)+3d_{k-1}(n-2)+2s_k(n-2)+d_k(n-2).
\end{align*}
Again, this can be repeated all the way to 0, so
\begin{equation}
d_k(n)=\sum_{j=1}^{n-1} d_{k-2}(j)+3d_{k-1}(j)+2s_k(j)
\label{eq:d_summation}
\end{equation}

\begin{theorem}
For all $k$, $s_k(n)$ and $d_k(n)$ are polynomials on $n$, where $s_k(n)$ is of degree $2k-3$ and $d_k(n)$ is of degree $2k-2$.
\end{theorem}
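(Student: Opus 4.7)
The plan is to induct on $k$, using the summation identities \eqref{eq:s_summation} and \eqref{eq:d_summation} together with Faulhaber's formula. Within each inductive step I would establish the claim for $s_k$ first and then use it to handle $d_k$, since $s_k$ appears inside the summation identity for $d_k$ but is itself expressible purely in terms of $d_{k-2}$ and $d_{k-1}$.

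\textbf{Base cases.} Verify directly that $d_1(n)=1$, $d_2(n)=2n^2-n$, and $s_2(n)=2n-1$ are polynomials of degrees $0=2(1)-2$, $2=2(2)-2$, and $1=2(2)-3$, respectively. All three have strictly positive leading coefficients, a clause I would carry as part of the inductive hypothesis.

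\textbf{Inductive step.} Fix $k\geq 3$ and assume the claim, with positive leading coefficient, holds for every smaller index. In \eqref{eq:s_summation}, the summand $d_{k-2}(j)+2d_{k-1}(j)$ is by the inductive hypothesis a polynomial in $j$ whose degree is dictated by $d_{k-1}$: its degree equals $2k-4$, strictly larger than the degree $2k-6$ of $d_{k-2}$, and its leading coefficient is twice the positive leading coefficient of $d_{k-1}$. Faulhaber's formula implies that summing a polynomial of degree $p$ with nonzero leading coefficient produces a polynomial in $n$ of degree $p+1$ with nonzero leading coefficient (in fact $\tfrac{1}{p+1}$ times the old one), so $s_k(n)$ is a polynomial of degree $2k-3$ with positive leading coefficient. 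Feeding this into \eqref{eq:d_summation}, the summand $d_{k-2}(j)+3d_{k-1}(j)+2s_k(j)$ now has degree exactly $2k-3$, dominated by $2s_k(j)$, with positive leading coefficient. A second application of Faulhaber's formula yields that $d_k(n)$ is a polynomial of degree $2k-2$ with positive leading coefficient, closing the induction.

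\textbf{Anticipated obstacle.} The one delicate point is ensuring that the leading coefficient of each summand is genuinely nonzero, since otherwise Faulhaber's formula would only bound the degree from above rather than determine it exactly. Carrying positivity of the leading coefficient through the induction sidesteps this: positivity propagates because both recursions combine non-negative quantities with positive rational weights, and because in each summand the dominant term strictly exceeds the others in degree, so no cancellation can occur. Everything else reduces to routine bookkeeping about degrees of sums of polynomials.
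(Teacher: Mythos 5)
Your proposal is correct and follows essentially the same route as the paper: strong induction on $k$, the summation identities together with Faulhaber's formula, with the degree of the summand dominated by $d_{k-1}(j)$ in the case of $s_k$ and by $s_k(j)$ in the case of $d_k$. The only difference is that you explicitly carry positivity of the leading coefficient through the induction to rule out degree drop, whereas the paper relies implicitly on the inductive hypothesis that the degrees are exact (hence the leading coefficients nonzero); this is a harmless, slightly more careful refinement rather than a different argument.
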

\begin{proof}
Proof is by strong induction on $k$

The base cases are satisfied for $k=1,2$, as $d_1(n)=1,d_2(n)=2n^2-n,s_1(n)=0,s_2=2n-1$, each of which is a polynomial with appropriate degree. Our inductive assumption is that for all positive integers $\ell$ less than some integer $k$, $s_\ell(n)$ and $d_\ell(n)$ are of degree $2\ell-3$ and $2\ell-2$, respectively. Using Equation~(\ref{eq:s_summation}), it must be a polynomial, because it is a sum of polynomials. The same holds true for $d_k(n)$ using Equation~(\ref{eq:d_summation}). To prove the degrees, we start with $s_k(n)$. From Equation~(\ref{eq:s_summation}), we have
\begin{equation*}
    s_k(n)=\sum_{j=1}^{n-1} d_{k-2}(j)+2d_{k-1}(j).
\end{equation*}
Expanding $d_{k-1}(j)$ as a polynomial of degree $2k-4$ gives
\begin{align*}
    s_k(n)&=\sum_{j=1}^{n-1}\left(d_{k-2}(j)+2\left(\sum_{i=0}^{2k-4} C_i j^i\right)\right)\\
    &=\sum_{j=1}^{n-1}\left( d_{k-2}(j)+2C_{2k-4}j^{2k-4}+2\left(\sum_{i=0}^{2k-5} C_i j^i\right)\right).
\end{align*}
By Faulhaber's formula, $\displaystyle\sum_{j=1}^{n-1} 2C_{2k-4}j^{2k-4}$ is equal to a polynomial of degree $2k-3$. The remainder of the summation involves terms of powers that are at most $2k-5$, so Faulhaber's formula will only produce polynomials of degree up to $2k-4$ for those terms. Therefore $s_k(n)$ is equal to a polynomial of degree $2k-3$. To prove $d_2(n)$ is equal to a polynomial of degree $2k-2$, we begin with Equation~(\ref{eq:d_summation}),
\begin{equation*}
d_k(n)=\sum_{j=1}^{n-1} d_{k-2}(j)+3d_{k-1}(j)+2s_k(j).
\end{equation*}
Expanding $s_k(j)$ as a polynomial of degree $2k-3$ gives
\begin{align*}
    d_k(n)&=\sum_{j=1}^{n-1}\left( d_{k-2}(j)+3d_{k-1}(j)+2\left(\sum_{i=0}^{2k-3}C_ij^i\right)\right)\\
    &=\sum_{j=1}^{n-1}\left( d_{k-2}(j)+3d_{k-1}(j)+2C_{2k-3}j^{2k-3}+2\left(\sum_{i=0}^{2k-4}C_ij^i\right)\right).
\end{align*}
By Faulhaber's formula, $\displaystyle\sum_{j=1}^{n-1} 2C_{2k-3}j^{2k-3}$ is equal to a polynomial of degree $2k-2$. The remainder of the summation involves terms of powers that are at most $2k-4$, so Faulhaber's formula will only produce polynomials of degree up to $2k-3$ for those terms. Therefore $d_k(n)$ is equal to a polynomial of degree $2k-2$. This completes the induction.
\end{proof}

The power in knowing that these functions are polynomials with predictable degrees is that their closed-form solutions can be found simply by using interpolation on data generated from the recursion, which can swiftly be implemented in most programming languages. A polynomial of degree $d$ is uniquely determined by $d+1$ points, so by generating enough data for some fixed $k$, the polynomial that counts the number of divisions can be calculated immediately, with no need for any combinatorial arguments.

\section{Future Work}
Counting the number of divisions of a $2\times n$ rectangular grid into $k$ pieces has exhibited multiple interesting patterns. The functions that count these divisions are all polynomials with predictable degrees, a fact that is not at all obvious from the first glance at this problem. 

While we have shown that all the division-counting functions will be polynomials, these polynomials are unknown to us for $k>5$. Our process of fitting polynomials to the data generated from the recursion eventually failed due to the limited precision of our computer. 

Our recursion technique works well for the $2\times n$ board, but quickly becomes more complicated when the height of the board is 3 or greater. For a $3\times (n+1)$ board, the recursion that counts the number of divisions  will involve the number of divisions of the $3\times n$ board into $k-3,k-2,k-1$ and $k$ pieces. While the $k-3$ case is easy (just add the three new squares as separate pieces), the other cases will depend on how the three rightmost squares of the $3\times n$ board are distributed among the pieces of each division. The cases would be: (i) all three are in different pieces, (ii) two are in one piece, the last is in another (of which there are three subcases depending on which square is alone), and (iii) all three are in the same piece. This would require defining more sets than just $\mathcal{S}_k(n)$ and $\mathcal{T}_k(n)$ to account for all possible ways to add the new squares.

It's worth noting that this problem is deeply connected to graph theory. Our goal of counting the number of divisions of the $2\times n$ into $k$ pieces board is equivalent to counting the number of ways to remove edges from a $2\times n$ lattice graph while following some specific rules. Those rules being that the removed edges must result in the graph being decomposed into $k$ connected components, and that if an edge is removed between two vertices, those vertices must end up in different connected components, similar to how a cut determines that two adjacent squares end up in different pieces.

We were able to leverage this equivalence to write Python code that counts all the divisions of an $m\times n$ board into $k$ pieces for all $1\leq k\leq mn$. This was achieved by encoding the grid as an $m\times n$ lattice graph with connected squares being adjoined by edges. A cut corresponded to removing the edge between two vertices. Using Networkx~\cite{hagberg}, we were able to iterate through all possible edge removals and count the number of pieces that were returned for each removal that followed the rules mentioned previously. Removals that did not follow the rules were ignored. The program is not efficient, but it is exhaustive. To access the code, go to \url{https://github.com/jakebr118/Counting-Divisions/blob/main/division_counting}.

A pattern with our polynomials is that the leading coefficient appeared to be decreasing monotonically for greater and greater $k$. The leading coefficients of the polynomials for $k\leq 5$ are $2,\frac{2}{3},\frac{4}{45},\frac{2}{315}$, in order. Whether or not this pattern continues is unknown to us, but we conjecture that it holds. Our reasoning is that in order to ``control" the number of divisions at small $n$, the coefficient on the highest-order term must stay small. A rigorous proof (or counterexample) is definitely in order, though.

\begin{abstract}
What is the number of ways to divide a $2\times n$ rectangular grid, cutting along the grid lines, into $k$ pieces? Are there formulas that calculate these number of divisions? If so, what do they look like? We show that all division-counting formulas will be polynomials on $n$ for all $k$, and that for a fixed $k$, the division-counting polynomial will have degree $2k-2$. We find these polynomials up to $k=5$.
\end{abstract}
\textbf{Acknowledgements}\\
The author wishes to thank Emilie Wiesner and Daniel Visscher for their support, knowledge, and inspiration throughout the course of the project. 

\end{document}